\documentclass[11pt, letterpaper]{elsarticle}
\usepackage{amsmath}
\usepackage{amssymb}
\usepackage{amsfonts}
\usepackage{graphicx}
\usepackage{multirow}
\usepackage{latexsym}
\usepackage{epic}
\usepackage{url}
\usepackage{soul}
\usepackage{afterpage}
\usepackage[unicode, pdftex]{hyperref}
\usepackage{multicol}
\usepackage{bbm}
\usepackage{setspace}
\usepackage{enumitem}
\usepackage{cleveref}
\usepackage[toc]{appendix}
\usepackage[norelsize,ruled, lined,linesnumbered]{algorithm2e}
\usepackage{amsthm}
\usepackage[symbol]{footmisc}
\usepackage[mathscr]{euscript}
\usepackage[left=0.9in,top=0.9in,right=0.9in,bottom=1in,nohead]{geometry}
\usepackage{empheq}
\usepackage{mathtools}
\DeclareMathOperator{\argmax}{argmax}
\DeclareMathOperator{\argmin}{argmin}

\makeatletter
\newtheorem*{rep@theorem}{\rep@title}
\newcommand{\newreptheorem}[2]{%
	\newenvironment{rep#1}[1]{%
		\def\rep@title{#2 \ref{##1}}%
		\begin{rep@theorem}}%
		{\end{rep@theorem}}}
\makeatother
\SetAlgoNlRelativeSize{-1}
\SetKwFor{For}{for}{}{end}
\SetKwFor{While}{while}{}{end}
\newreptheorem{example}{Example}
\newtheorem{theorem}{Theorem}

\newtheorem{lemma}{Lemma}

\newtheorem{corollary}{Corollary}

\usepackage{xcolor}
\definecolor{forestgreen}{RGB}{34,139,34}
\usepackage[colorinlistoftodos,prependcaption,textsize=tiny]{todonotes}
\usepackage{xargs}
\usepackage{booktabs}
\usepackage{caption}
\usepackage{subcaption}

\usepackage{lineno}
\usepackage{float}
\usepackage[section]{placeins}
\usepackage{arydshln}
\usepackage{latexsym}
\usepackage{epic}
\usepackage{amsmath,amsthm,amssymb}
\usepackage{graphicx}
\usepackage{url}
\usepackage{pgfplots}\usetikzlibrary{plotmarks}
\usepackage{soul}
\usepackage{afterpage}
\usepackage{cleveref}
\usepackage{bbm}
\usetikzlibrary{decorations.markings}
\usepackage{setspace}
\usepackage{appendix}
\usepackage{comment}
\DeclareGraphicsExtensions{.pdf,.png,.jpg,.bmp}

\allowdisplaybreaks

\makeatletter
	\def\ps@pprintTitle{%
		\let\@oddhead\@empty
		\let\@evenhead\@empty
		\let\@oddfoot\@empty
		\let\@evenfoot\@empty
	}
	\makeatother

\begin{document}
	\begin{frontmatter}
\title{\onehalfspacing On Big-$M$ Reformulations of Bilevel Linear Programs: \\ Hardness of A Posteriori Verification}
		
		\author[label1]{Sergey S.~Ketkov\footnote[2]{Corresponding author. Email: sergei.ketkov@business.uzh.ch; phone: +41 078 301 85 21.}}
		\author[label1]{Oleg A. Prokopyev}
		\address[label1]{Department of Business Administration, University of Zurich, Zurich, 8032, Switzerland}
		\begin{abstract}
			\onehalfspacing	
			\nolinenumbers		
	   A standard approach to solving optimistic \textit{bilevel linear programs} (BLPs) is to replace the lower-level problem with its Karush--Kuhn--Tucker (KKT) optimality conditions and reformulate the resulting complementarity constraints using auxiliary binary variables. This yields a single-level mixed-integer linear programming (MILP) model involving \textit{big}-$M$ parameters. 
	   While sufficiently large and bilevel-correct big-$M$s can be computed in polynomial time, verifying \textit{a priori} that given big-$M$s do not cut off any~feasible or optimal lower-level solutions is known to be computationally difficult. In this paper, we establish two complementary hardness results. 
	   First, we show that, even with a single potentially incorrect big-$M$ parameter, it is $coNP$-complete to verify \textit{a posteriori} whether the optimal solution of the resulting~MILP model is bilevel
	   optimal. 
	   In particular, this negative result persists for min-max problems without coupling constraints and applies to strong-duality-based reformulations of mixed-integer~BLPs. Second,~we show that verifying global big-$M$ correctness remains computationally difficult \textit{a posteriori}, even when an optimal solution of the MILP model is available. 
		\end{abstract}
		\begin{keyword} \doublespacing
		Bilevel optimization; big-$M$; Optimality conditions; Strong duality.
		\end{keyword}
		
	\end{frontmatter}
	\onehalfspacing
\section{Introduction} \label{sec: intro}
\textit{Bilevel optimization} models a hierarchical interaction between two decision-makers, commonly referred to as a \textit{leader} and a \textit{follower}, each with its own objective function and constraints. The leader makes a decision first, optimizing its objective function and anticipating that the follower subsequently solves its own optimization problem, parameterized by the leader’s decision. Comprehensive surveys of bilevel optimization problems and their applications can be found in \cite{Colson2007, Kleinert2021, Sinha2017}. 

In this study, we analyze a canonical class of bilevel optimization problems in which both the leader and the follower solve linear programs (LPs). Specifically, we consider \textit{bilevel linear programs} (BLPs) of the~form:
\begin{subequations}
	\label{BLP}
	\begin{align}
		[\textbf{BLP}]: \quad
		\min_{\mathbf{x}, \mathbf{y}^*} \; & 
		\Big\{ 
		\mathbf{a}^{\top}\mathbf{x} + 
		\mathbf{d}^{\top}\mathbf{y}^* 
		\Big\} \label{obj: leader} \\
		\text{s.t. } 
		& \mathbf{H} \mathbf{x} + 
		\mathbf{G} \mathbf{y}^* \leq \mathbf{h} \label{cons: leader} \\
		& \mathbf{y}^* \in 
		\argmin_{\;\mathbf{y}} \,
		\Big\{\mathbf{g}^{\top}\mathbf{y}: \; 	\mathbf{L} \mathbf{x} + 
		\mathbf{F} \mathbf{y} 
		\leq
		\mathbf{f} \Big\}, \label{cons: follower}
	\end{align}
\end{subequations}
where $\mathbf{H} \in \mathbb{Q}^{m_l \times n_l}$,
$\mathbf{G} \in \mathbb{Q}^{m_l \times n_f}$, 
$\mathbf{L} \in \mathbb{Q}^{m_f \times n_l}$, 
$\mathbf{F} \in \mathbb{Q}^{m_f \times n_f}$, $\mathbf{h} \in \mathbb{Q}^{m_l}$, $\mathbf{f} \in \mathbb{Q}^{m_f}$, $\mathbf{a} \in \mathbb{Q}^{n_l}$, $\mathbf{d} \in \mathbb{Q}^{n_f}$ and~$\mathbf{g} \in \mathbb{Q}^{n_f}$; see, e.g., \cite{Audet1997, Hansen1992}. Accordingly, $n_i$ and $m_i$, $i \in \{l, f\}$, correspond to the numbers of variables and constraints for the leader and the follower, respectively. 

By construction, the bilevel problem [\textbf{BLP}] is formulated in the \textit{optimistic} sense.
That is, whenever the follower’s problem in (\ref{cons: follower}) has multiple optimal solutions, the leader selects one that also minimizes its objective function (\ref{obj: leader}) and satisfies the \textit{coupling constraints}~(\ref{cons: leader}). 
Furthermore, we say that~[\textbf{BLP}] has no coupling constraints when $\mathbf{G} = \mathbf{0}$, and we refer to it as a \textit{min-max} problem when~$\mathbf{G} = \mathbf{0}$ and~$\mathbf{g} = -\mathbf{d}$. Notably, in contrast to single-level LPs, even the min–max version of~[\textbf{BLP}] is known to be strongly $NP$-hard; see, e.g., \cite{Hansen1992}. 




A standard approach to solving the optimistic problem [\textbf{BLP}] is to replace the follower’s problem in (\ref{cons: follower}) with its Karush-Kuhn-Tucker (KKT) optimality conditions; see, e.g., \cite{Audet1997, Fortuny1981}. That is,~[\textbf{BLP}] can be expressed as a mathematical program with equilibrium constraints (MPEC) given by:
\begin{subequations}
	\label{MPEC}
	\begin{align}
		\min_{\mathbf{x}, \mathbf{y}, \boldsymbol{\lambda}} \; & 
		\Big\{ 
		\mathbf{a}^{\top}\mathbf{x} + 
		\mathbf{d}^{\top}\mathbf{y}
		\Big\} \\
		\text{s.t. } 
		& \mathbf{H} \mathbf{x} + 
		\mathbf{G} \mathbf{y} \leq \mathbf{h} \label{cons: coupling} \\
		& \mathbf{L} \mathbf{x} + 
		\mathbf{F} \mathbf{y} 
		\leq
		\mathbf{f} \label{cons: primal feasibility} \\
		& \boldsymbol{\lambda} \geq \mathbf{0} \label{cons: dual feasibility 1}\\
		& \mathbf{g} + \mathbf{F}^\top \boldsymbol{\lambda} = \mathbf{0} \label{cons: dual feasibility 2}\\
		& (\mathbf{f} - \mathbf{L} \mathbf{x} - 
		\mathbf{F} \mathbf{y})^\top \boldsymbol{\lambda} = 0. \label{cons: cs}
	\end{align}
\end{subequations}
Here, primal feasibility of the follower is guaranteed by constraint (\ref{cons: primal feasibility}), constraints (\ref{cons: dual feasibility 1})–(\ref{cons: dual feasibility 2}) ensure dual feasibility and stationarity, respectively, whereas (\ref{cons: cs}) enforces complementary slackness.

To linearize (\ref{cons: cs}), one may introduce new binary variables $\mathbf{u} \in \{0, 1\}^{m_f}$ and primal slack variables $\mathbf{s} := \mathbf{f} - \mathbf{L} \mathbf{x} - 
\mathbf{F} \mathbf{y}$ such~that
 \begin{align} \nonumber
 	& \boldsymbol{\lambda} \leq M^{d} \mathbf{u} \; \mbox{ and } \; 
	\mathbf{s} \leq M^{p} (\mathbf{1} - \mathbf{u}).
\end{align}
Here, $M^d \in \mathbb{R}_{>0}$ and $M^p \in \mathbb{R}_{>0}$ are big-$M$ constants representing global upper bounds, respectively, for the dual variables $\boldsymbol{\lambda}$ and the primal slacks $\mathbf{s}$; see, e.g., \cite{Audet1997, Kleinert2020}. Consequently, [\textbf{BLP}] can be solved by leveraging a mixed-integer linear programming (MILP) problem of the form:
\begin{subequations} \label{MILP} \begin{align}
		 \min_{\mathbf{x}, \mathbf{y}, \boldsymbol{\lambda}, \mathbf{s}, \mathbf{u}} \; & \Big\{ \mathbf{a}^{\top}\mathbf{x} + \mathbf{d}^{\top}\mathbf{y} \Big\} \\ 
		 \text{s.t. } & \text{(\ref{cons: coupling})--(\ref{cons: dual feasibility 2})} \\ 
		 & \mathbf{s} = \mathbf{f} - \mathbf{L} \mathbf{x} - \mathbf{F} \mathbf{y} \\ & \boldsymbol{\lambda} \leq M^{d} \mathbf{u} \label{cons: big M 0 dual}\\
		 & \mathbf{s} \leq M^{p} (\mathbf{1} - \mathbf{u}) \label{cons: big M 0 primal} \\
		 & \mathbf{u} \in \{0, 1\}^{m_f}.
	 \end{align} \end{subequations}

From a practical perspective, the choice of the big-$M$ parameters, $M^d$ and $M^p$, affects both the correctness of the MILP reformulation~(\ref{MILP}) and its computational performance. Intuitively, choosing correct big-$M$s in (\ref{MILP}) is difficult since either the primal or the dual feasible region of the follower's problem in (\ref{cons: follower}) is unbounded; see, e.g.,~\cite{Clark1961}. As a consequence, practitioners often select sufficiently large big-$M$ parameters without explicitly verifying their correctness, solve (\ref{MILP}) using off-the-shelf MILP solvers, and then treat the obtained solution as \textit{bilevel optimal}. 

Indeed, in terms of computational complexity, Buchheim~\cite{Buchheim2023} shows that \textit{bilevel-correct} big-$M$s of polynomial encoding length in~(\ref{cons: big M 0 dual})-(\ref{cons: big M 0 primal}) can be computed in polynomial~time. However, while using these big-$M$ values in (\ref{MILP}) yields an equivalent reformulation of [\textbf{BLP}], they are impractically large and may~compromise computational effectiveness. Moreover, Kleinert et al.~\cite{Kleinert2020} demonstrate that verifying whether given big-$M$ values preserve either feasible or optimal solutions of the follower’s problem~in~(\ref{cons: follower}) is computationally difficult. In~particular,
checking that no feasible vertex of the follower's dual polyhedron is cut off is $coNP$-complete, whereas ensuring that no optimal dual solution is cut off is as hard as solving the original bilevel problem.

It is worth mentioning that Kleinert et al.~\cite{Kleinert2021} analyze correctness of $M^d$ and $M^p$ \textit{a priori}, i.e., before solving the MILP reformulation~(\ref{MILP}). At the same time, one might hope that, after solving~(\ref{MILP}), it is possible to verify efficiently whether the obtained solution is bilevel optimal or the previously chosen~big-$M$ constants are bilevel correct. That is, we consider the following \textit{a posteriori} questions:
\begin{itemize}
	\item[\textbf{Q1}.] Given potentially incorrect $M^d$ and $M^p$, is it possible to verify in polynomial time that an optimal solution of~(\ref{MILP}) is bilevel optimal, i.e., solves the original bilevel problem [\textbf{BLP}]?
	\item[\textbf{Q2}.] Does the verification of \textit{global} big-$M$ correctness, as defined in~\cite{Kleinert2020}, become efficient when performed a posteriori, i.e., when an optimal solution to~(\ref{MILP}) is available?
\end{itemize}

From a computational complexity perspective, \textbf{Q1} and \textbf{Q2} are fundamentally different questions. Global big-$M$ correctness in \textbf{Q2} is a property of the MILP reformulation (\ref{MILP}), whereas bilevel optimality in \textbf{Q1} concerns individual solutions. In particular, global big-$M$ correctness implies bilevel optimality, but not vice versa. Moreover, a positive answer to either \textbf{Q1} or~\textbf{Q2} could help tune the parameters~$M^d$ and~$M^p$ in practice. For example, one could iteratively solve~(\ref{MILP}) and verify bilevel optimality or big-$M$ correctness, following a trial-and-error procedure similar to that discussed in~\cite{Pineda2018,Pineda2019}.

It turns out, however, that the problems in \textbf{Q1} and \textbf{Q2} are both computationally difficult. Specifically, we establish the following negative results:
\begin{itemize}
	\item To address \textbf{Q1}, we prove that, even when $M^d = M^p$, the problem of verifying that an optimal solution of~(\ref{MILP}) is bilevel optimal is~$coNP$-complete. 
	This result extends to strong duality-based reformulations of mixed-integer BLPs and to the class of min-max problems. 
	\item As a consequence, we further show that, in all considered problem settings, the big-$M$ reformulation cannot provide any nontrivial approximation guarantee for the original~BLP.
	\item To address \textbf{Q2}, we demonstrate that the hardness results of Kleinert~et~al.~\cite{Kleinert2020} persist in the a posteriori setting, where an optimal solution of the MILP reformulation~(\ref{MILP}) is part of the input.
\end{itemize}
We therefore conclude that access to an optimal solution of a big-$M$ reformulation does not necessarily alleviate the difficulty of choosing appropriate big-$M$ values.

The remainder of the paper is organized as follows. In Sections \ref{sec: KKT} and \ref{sec: strong duality}, we prove our hardness results for the KKT-based reformulation of [\textbf{BLP}] and strong-duality-based reformulations of mixed-integer~BLPs, respectively. In Section~\ref{sec: min-max}, we show how to extend these results to min-max problems. Finally, Section~\ref{sec: global} addresses the problem of global big-$M$ verification for the MILP reformulation (\ref{MILP}), followed by our conclusions in Section \ref{sec: conclusion}.

\textbf{Notation.} We use $\mathbb{R}_+$ ($\mathbb{Z}_+$) for the set of nonnegative real (integer) numbers and $\mathbb{R}_{>0}$ ($\mathbb{Z}_{>0}$) for the sets of positive real (integer) numbers. 
Vectors and matrices are denoted by boldface letters, with~
$\mathbf{1}$ representing 
the all-ones vector of appropriate dimension. Finally, $\Vert \cdot \Vert_p$ denotes the $\ell_p$-norm for $p \geq 1$.

\section{KKT-based reformulation} \label{sec: KKT}
In the remainder of the paper, the leader's induced feasible region in [\textbf{BLP}] is defined as:
 \begin{equation} \label{eq: leader's feasible set extended}
	 	H_x := \big\{\mathbf{x} \in \mathbb{R}^{n_l}: \; \exists \mathbf{y} \in \mathbb{R}^{n_f} \text{ such that } \mathbf{H} \mathbf{x} + 
	 	\mathbf{G} \mathbf{y} \leq \mathbf{h} \text{ and } \mathbf{L} \mathbf{x} + 
	 	\mathbf{F} \mathbf{y} 
	 	\leq
	 	\mathbf{f} \big\}
	 \end{equation}
To ensure well-posedness of the bilevel problem~[\textbf{BLP}], we make the following standard assumption: 
\begin{itemize}
	 	\item[\textbf{A1.}] The induced feasible region $H_x$ is non-empty and bounded, and the follower's problem in~(\ref{cons: follower}) admits a finite optimal solution for all $\mathbf{x} \in H_x$.
\end{itemize}

To address the research question \textbf{Q1}, we first analyze the KKT-based big-$M$ reformulation (\ref{MILP}) and introduce the following decision problem:
\begin{itemize}
	\item[$ $] [\textbf{BLP-D}]: Given an optimal solution $(\tilde{\mathbf{x}}^*, \tilde{\mathbf{y}}^*)$ of the MILP reformulation (\ref{MILP}) with $M^d \in \mathbb{R}_{>0}$ and $M^{p} \in \mathbb{R}_{>0}$, decide whether $(\tilde{\mathbf{x}}^*, \tilde{\mathbf{y}}^*)$ is also optimal for [\textbf{BLP}].
\end{itemize}
It can readily be verified that [\textbf{BLP-D}] belongs to $coNP$. 
To establish $coNP$-completeness, we use a reduction from the optimality verification problem for integer linear programs (ILPs):
\begin{itemize}
	\item[$ $] [\textbf{ILP-D}]: Given a 0–1 integer linear program
	\begin{equation} \label{ILP}
		\min \Big\{\mathbf{c}^\top \mathbf{x}: \; \mathbf{A}\mathbf{x} \leq \mathbf{b}, \; \mathbf{x} \in \{0, 1\}^n \Big\},
	\end{equation}
	where $(\mathbf{A}, \mathbf{b}, \mathbf{c})$ are integral, and a feasible solution $\bar{\mathbf{x}} = \mathbf{0}$, decide whether $\bar{\mathbf{x}}$ is optimal for (\ref{ILP}).
\end{itemize}

Since the complement of [\textbf{ILP-D}] for an arbitrary feasible decision $\bar{\mathbf{x}} \in \{0, 1\}^n$ is $NP$-complete~\cite{Garey1979}, the problem is $coNP$-complete when $\bar{\mathbf{x}}$ is part of the input. Furthermore, this result remains valid for the special case where~$\bar{\mathbf{x}}=\mathbf{0}$. That is, for each $i \in \{1, \ldots, n\}$, one may introduce new variables~$x'_i$ by setting $x'_i := x_i$ when $\bar{x}_i = 0$, and $x'_i := 1 - x_i$ otherwise. This polynomial-time bijection maps $\bar{\mathbf{x}}$ to $\mathbf{0}$ and translates~(\ref{ILP}) into an equivalent 0-1 ILP (up to an additive constant in the objective function). 

The following result establishes $coNP$-completeness of [\textbf{BLP-D}].
\begin{theorem} \label{theorem 1}
The optimality verification problem \upshape [\textbf{BLP-D}] \itshape is $coNP$-complete even when there is only one global big-$M$ parameter, i.e., $M^p = M^d$. 
\begin{proof}
To prove $coNP$-completeness, we use a polynomial-time reduction from [\textbf{ILP-D}]. First, we observe that (\ref{ILP}) can be expressed as an optimistic bilevel problem of the form:
\begin{subequations} \label{ILP 2}
\begin{align}
	\min_{\mathbf{x}, \mathbf{y}^*} & \; \mathbf{c}^\top \mathbf{x} \\
	\mbox{s.t. } & \mathbf{A}\mathbf{x} \leq \mathbf{b} \label{cons: leader feasibility} \\
	& \mathbf{0} \leq \mathbf{x} \leq \mathbf{1} \\
	& \mathbf{y}^* = \mathbf{0} \label{cons: coupling binary} \\
  & \mathbf{y}^* \in \argmin_{\,\mathbf{y}}
  \Big\{ -\sum_{i = 1}^{n} 2 y_i \Big\} \label{obj: follower binary} \\
  & \text{s.t.} \quad 2\mathbf{y} \leq \mathbf{x} \label{cons: follower binary 1}\\
  & \phantom{\text{s.t.} \quad} \mathbf{y} \leq \mathbf{1} - \mathbf{x} \label{cons: follower binary 2}, 
 \end{align}
\end{subequations}
where the follower's objective function in (\ref{obj: follower binary}) and the follower constraints (\ref{cons: follower binary 1}) are scaled by positive constants without loss of generality. Indeed, constraints~(\ref{obj: follower binary})--(\ref{cons: follower binary 2})
together with the coupling constraint~(\ref{cons: coupling binary}) imply that the follower's optimal solution satisfies \[y_i^* = \min\{\tfrac{1}{2}x_i, 1 - x_i\} = 0 \quad \forall i \in \{1, \ldots n\},\] and therefore $\mathbf{x} \in \{0,1\}^n$; see, e.g., \cite{Audet1997}. 

Let $\boldsymbol{\lambda} \in \mathbb{R}^n_+$ and $\boldsymbol{\nu} \in \mathbb{R}^n_+$ be dual variables corresponding to the follower constraints (\ref{cons: follower binary 1})--(\ref{cons: follower binary 2}), respectively. Applying the KKT conditions to the follower's problem (\ref{obj: follower binary})--(\ref{cons: follower binary 2}) yields a version of (\ref{MILP}). Furthermore, by explicitly using $\mathbf{y}^* = \mathbf{0}$, we obtain the following MILP reformulation of (\ref{ILP 2}):
\begin{subequations} \label{ILP 3}
	\begin{align}
		\min_{\mathbf{x}, \boldsymbol{\lambda}, \boldsymbol{\nu}, \mathbf{u}, \mathbf{v}} & \; \mathbf{c}^\top \mathbf{x} \\
		\mbox{s.t. } & \mathbf{A}\mathbf{x} \leq \mathbf{b} \\
		& \begin{rcases} 2\lambda_i + \nu_i = 2 \\ 0 \leq \lambda_i \leq M^d \, u_i \\
		0 \leq x_i \leq M^p (1 - u_i) \\
		0 \leq \nu_i \leq M^d \, v_i \\
	  0 \leq 1 - x_i \leq M^p (1 - v_i) \quad
		\end{rcases} \; \forall i \in \{1, \ldots, n\} \\
		& \mathbf{u} \in \{0, 1\}^n, \; \mathbf{v} \in \{0, 1\}^n.
	\end{align}
\end{subequations}
In particular, feasibility of (\ref{ILP 3}) requires that $\mathbf{u} + \mathbf{v} = \mathbf{1}$. Eliminating $\boldsymbol{\nu}$ and $\mathbf{v}$ therefore yields the~following reformulation:
\begin{subequations} \label{ILP 4}
	\begin{align}
		\min_{\mathbf{x}, \boldsymbol{\lambda}, \boldsymbol{u}} & \; \mathbf{c}^\top \mathbf{x} \\
		\mbox{s.t. } & \mathbf{A}\mathbf{x} \leq \mathbf{b} \\
		& \begin{rcases} 0 \leq \lambda_i \leq M^d \, u_i \\
			0 \leq x_i \leq M^p (1 - u_i) \\
			0 \leq 1 - \lambda_i \leq \tfrac{1}{2} M^d (1 - u_i) \quad \\
			0 \leq 1 - x_i \leq M^p \, u_i
		\end{rcases} \; \forall i \in \{1, \ldots, n\} \label{cons: big M}\\
		& \mathbf{u} \in \{0, 1\}^n. \label{cons: binary big M}
	\end{align}
\end{subequations}

Next, we set $M^p = M^d = 1$. In this case, $u_i = 0$ implies that $\lambda_i = 0$ and $1 - \lambda_i \leq \tfrac{1}{2}$, which is infeasible. Hence, constraints (\ref{cons: big M})
imply a unique feasible (and optimal) solution of (\ref{ILP 4}) with $u_i = 1$, $x_i = 0$ and $\lambda_i = 1$ for each $i \in \{1, \ldots, n\}$. We observe that a ``yes''-instance of [\textbf{ILP-D}] implies a ``yes''-instance of~[\textbf{BLP-D}], and vice versa. This observation concludes the proof. 
\end{proof}	
\end{theorem}

In the following, we establish a stronger hardness result in the sense of approximation. Let $z^*$ and~$z^{\text{max}}$ denote, respectively, the optimal and the maximum feasible objective function values~in~(\ref{ILP}). We~then define an $\varepsilon$-approximate optimality verification problem for (\ref{ILP}) in the sense of Vavasis~\cite{Vavasis1993}:
\begin{itemize}
	\item[$ $] [\textbf{ILP-D}$_\varepsilon$]: Given an instance of (\ref{ILP}), a feasible solution~$\bar{\mathbf{x}} = \mathbf{0}$ and $\varepsilon \in (0, 1)$, decide whether 
	\begin{equation} \label{eq: approximation}
	\mathbf{c}^\top \bar{\mathbf{x}} - z^* = -z^* \leq \varepsilon (z^{\text{max}} - z^*).	
  \end{equation}	
\end{itemize}
Since $z^{\text{max}}$ is defined as the maximum feasible value in (\ref{ILP}), membership of [\textbf{ILP-D}$_\varepsilon$] in $coNP$ is not immediate. We therefore restrict our attention to
proving $coNP$-hardness. 
\begin{lemma} \label{lemma 1} 
\upshape [\textbf{ILP-D}$_\varepsilon$] \itshape is $coNP$-hard for any fixed $\varepsilon \in (0, 1)$.
\begin{proof}
Since all data in (\ref{ILP}) are integral, $\bar{\mathbf{x}} = \mathbf{0}$ is not optimal for (\ref{ILP}) if and only if there exists feasible~$\mathbf{x}$ such that $\mathbf{c}^\top \mathbf{x} \leq -1$. Let us consider the following 0-1 ILP associated with~(\ref{ILP}):
\begin{subequations} \label{ILP epigraph}
	\begin{align}
		\min_{\mathbf{x}, z} & \, -z \\
		\text{s.t. } &
		\mathbf{A}\mathbf{x} \leq \mathbf{b} \\
		& \mathbf{c}^\top \mathbf{x} \leq -1 + M(1 - z) \\
		& \mathbf{\mathbf{x}} \in \{0,1\}^n, \; z \in \{0, 1\}, 
	\end{align}
\end{subequations}
where $M = n \Vert \mathbf{c} \Vert_{\infty} + 1$. Notably, if $\bar{\mathbf{x}} = \mathbf{0}$ is optimal for (\ref{ILP}), then $z = 1$ is infeasible in (\ref{ILP epigraph}), and hence the optimal objective function value of (\ref{ILP epigraph}) is $0$. Otherwise, if $\bar{\mathbf{x}} = \mathbf{0}$ is not optimal for (\ref{ILP}), then $z = 1$ is optimal for (\ref{ILP epigraph}), yielding the objective value $-1$. Given the instance of (\ref{ILP epigraph}) with $\bar{\mathbf{x}} = \mathbf{0}$ the maximal feasible value~$z^{\text{max}} = 0$, we conclude that $\bar{\mathbf{x}} = \mathbf{0}$ is optimal for (\ref{ILP}) if and only if inequality~(\ref{eq: approximation}) holds for~(\ref{ILP epigraph}). This observation concludes the~proof. 
\end{proof}
\end{lemma}

Let $z_b^*$ and $z^{\text{max}}_b$ denote the optimal and the maximum feasible objective function values of [\textbf{BLP}]. 
Then, an $\varepsilon$-approximate optimality verification problem for [\textbf{BLP}] is defined as:
\begin{itemize}
	\item[$ $] [\textbf{BLP-D}$_\varepsilon$]: Given an optimal solution $(\tilde{\mathbf{x}}^*, \tilde{\mathbf{y}}^*)$ of the MILP reformulation (\ref{MILP}) with $M^{d} \in \mathbb{R}_{>0}$, $M^{p} \in \mathbb{R}_{>0}$, and $\varepsilon \in (0, 1)$, decide whether $\mathbf{a}^{\top} \tilde{\mathbf{x}}^* + 
	\mathbf{d}^{\top} \tilde{\mathbf{y}}^* - z_b^* \leq \varepsilon (z^{\text{max}}_b - z^*_b)$.
\end{itemize}
The following result demonstrates that even deciding whether the MILP reformulation (\ref{MILP}) yields an $\varepsilon$-approximation of [\textbf{BLP}] is $coNP$-hard.
\begin{corollary} \label{corollary 1}
Under the conditions of Theorem \ref{theorem 1}, \upshape [\textbf{BLP-D}$_\varepsilon$] \itshape is $coNP$-hard for any fixed $\varepsilon \in (0, 1)$.
\begin{proof} The result follows from Lemma \ref{lemma 1} and the proof of Theorem \ref{theorem 1}. 
\end{proof}
\end{corollary}

\section{Strong duality-based reformulation} \label{sec: strong duality}
In this section, we consider a class of mixed-integer optimistic BLPs, where the leader variables are assumed to be \textit{binary} and the follower variables are continuous; see, e.g., \cite{Zare2019}. In other words, we consider the following optimistic bilevel problem:
\begin{subequations}
	\label{MIBLP}
	\begin{align}
		[\textbf{BLP}']: \quad
		\min_{\mathbf{x}, \mathbf{y}^*} \; & 
		\Big\{ 
		\mathbf{a}^{\top}\mathbf{x} + 
		\mathbf{d}^{\top}\mathbf{y}^* 
		\Big\} \label{obj: leader binary} \\
		\text{s.t. } 
		& \mathbf{x} \in \{0, 1\}^{n_l} \label{cons: leader binary 1} \\
		& \mathbf{H} \mathbf{x} + 
		\mathbf{G} \mathbf{y}^* \leq \mathbf{h} \label{cons: leader binary 2} \\
		& \mathbf{y}^* \in 
		\argmin_{\,\mathbf{y}} \,
		\Big\{\mathbf{g}^{\top}\mathbf{y}: \; \mathbf{L} \mathbf{x} + 
		\mathbf{F} \mathbf{y} 
		\leq
		\mathbf{f} \Big\}, \label{cons: follower binary}
	\end{align}
\end{subequations}
where, in contrast to [\textbf{BLP}], the additional binary constraint (\ref{cons: leader binary 1}) is imposed.

As argued in \cite{Zare2019}, when the leader variables $\mathbf{x}$ are integer, it is often computationally advantageous to replace the complementary slackness conditions (\ref{cons: cs}) with a single strong duality constraint. We~therefore focus on the following strong duality-based reformulation of [\textbf{BLP}$'$]:
\begin{subequations}
	\label{MPEC binary}
	\begin{align}
		\min_{\mathbf{x}, \mathbf{y}, \boldsymbol{\lambda}} \; & 
		\Big\{ 
		\mathbf{a}^{\top}\mathbf{x} + 
		\mathbf{d}^{\top}\mathbf{y}
		\Big\} \\
		\text{s.t. } 
		& \text{(\ref{cons: coupling})--(\ref{cons: dual feasibility 2})}, \label{cons: dual binary} \\
		& \mathbf{x} \in \{0, 1\}^{n_l} \label{cons: leader binary} \\
		& \mathbf{g}^\top \mathbf{y} = (\mathbf{L} \mathbf{x} - \mathbf{f})^\top \boldsymbol{\lambda}, \label{cons: strong duality}
	\end{align}
\end{subequations}
where constraint (\ref{cons: strong duality}) ensures that the optimal primal and dual objective function values in (\ref{cons: follower binary}) coincide. Next, for each $i \in \{1, \ldots, n_l\}$ and $j \in \{1, \ldots, m_f\}$, we introduce a new variable $u_{ij} = x_i \lambda_j$. Given that~$\mathbf{x} \in \{0, 1\}^{n_l}$, the equality constraint $u_{ij} = x_i \lambda_j$ can be replaced by the following linear inequalities~\cite{Zare2019}:
\begin{subequations} 
	\begin{align}
		& 0 \leq u_{ij} \leq \lambda_j \label{cons: stong duality linear 1} \\
		& u_{ij} \leq M_j x_{i} \label{cons: stong duality linear 2}\\
		& u_{ij} \geq \lambda_j + M_j (x_{i} - 1), \label{cons: stong duality linear 3}
	\end{align}
\end{subequations}
where $M_j > 0$ is an upper bound for $\lambda_j$. As a result, [\textbf{BLP}$'$] admits the following MILP~reformulation:
\vspace*{-0.5cm}
\begin{subequations}
	\label{MILP 2}
	\begin{align}
		\min_{\mathbf{x}, \mathbf{y}, \boldsymbol{\lambda}, \mathbf{u}} \; & 
		\Big\{ 
		\mathbf{a}^{\top}\mathbf{x} + 
		\mathbf{d}^{\top}\mathbf{y}
		\Big\} \\
		\text{s.t. } 
		& \text{(\ref{cons: dual binary})--(\ref{cons: leader binary})},\\
		& \text{(\ref{cons: stong duality linear 1})--(\ref{cons: stong duality linear 3})} \quad \forall i \in \{1, \ldots, n_l\}, \quad \forall j \in \{1, \ldots, m_f\}, \\
		& \mathbf{g}^\top \mathbf{y} = \sum_{i = 1}^{n_l} \sum_{j = 1}^{m_f} L_{ij} u_{ij} - \mathbf{f}^\top \boldsymbol{\lambda}. \label{cons: strong duality 2}
	\end{align}
\end{subequations}

Similar to [\textbf{BLP-D}], we formulate an optimality verification problem for [\textbf{BLP}$'$]:
\begin{itemize}
	\item[$ $] [\textbf{BLP-D$'$}]: Given an optimal solution $(\tilde{\mathbf{x}}^*, \tilde{\mathbf{y}}^*)$ of the MILP reformulation (\ref{MILP 2}) with $\mathbf{M} \in \mathbb{R}^{m_f}_{>0}$, decide whether $(\tilde{\mathbf{x}}^*, \tilde{\mathbf{y}}^*)$ is also optimal for [\textbf{BLP}$'$].
\end{itemize}
The following result holds.

\begin{theorem} \label{theorem 2}
	The optimality verification problem \upshape [\textbf{BLP-D}$'$] \itshape is $coNP$-complete even when there is only one global big-$M$ parameter, i.e., $M_j = M$ for all $j \in \{1, \ldots, m_f\}$. 
	\begin{proof}
		Similar to the proof of Theorem \ref{theorem 1}, we use a polynomial-time reduction from [\textbf{ILP-D}] and consider the optimistic bilevel problem (\ref{ILP 2}) with an additional binary constraint $\mathbf{x} \in \{0, 1\}^n$. In~particular, this constraint is redundant in (\ref{ILP 2}), since it is readily enforced by constraints (\ref{cons: coupling binary})--(\ref{cons: follower binary 2}). By applying the strong duality-based reformulation to the follower's problem (\ref{obj: follower binary})--(\ref{cons: follower binary 2}) with $\mathbf{y}^* = \mathbf{0}$, we obtain the following MILP reformulation of (\ref{ILP 2}):
		\begin{subequations} \label{ILP 3 2}
			\begin{align}
				\min_{\mathbf{x}, \boldsymbol{\lambda}, \boldsymbol{\nu}, \mathbf{u}, \mathbf{v}} & \; \mathbf{c}^\top \mathbf{x} \\
				\mbox{s.t. } & \mathbf{x} \in \{0, 1\}^n \label{cons: binary} \\ & \mathbf{A}\mathbf{x} \leq \mathbf{b} \\
				& \begin{rcases} 2\lambda_i + \nu_i = 2 \\ 
				0 \leq u_i \leq \lambda_i \\
			  0 \leq v_i \leq \nu_i \\
			  u_i \leq M_i x_i \\
			  v_i \leq \tilde{M}_i x_i \\
			  u_i \geq \lambda_i + M_i(x_i - 1) \\
			  v_i \geq \nu_i + \tilde{M}_i(x_i - 1) \quad
				\end{rcases} \; \forall i \in \{1, \ldots, n\} \label{cons: strong duality linear} \\
				& -\sum_{i = 1}^n \big(u_i + \nu_i - v_i \big) = 0, \label{cons: strong duality 3} 
			\end{align}
		\end{subequations}
		 where constraints $u_i = \lambda_i x_i$ and $v_i = \nu_i x_i$ are linearized for each $i \in \{1, \ldots, n\}$.
		 		
		Then, $u_i \geq 0$, $\nu_i - v_i \geq 0$ and (\ref{cons: strong duality 3}) imply that $u_i = 0$ and $v_i = \nu_i$ for each~$i \in \{1, \ldots, n\}$. By~eliminating $u_i$, $v_i$ and $\nu_i$, (\ref{ILP 3 2}) can be expressed as: 
		\begin{subequations} \label{ILP 4 2}
			\begin{align}
				\min_{\mathbf{x}, \boldsymbol{\lambda}} & \; \mathbf{c}^\top \mathbf{x} \\
				\mbox{s.t. } & \mathbf{x} \in \{0, 1\}^n \\ & \mathbf{A}\mathbf{x} \leq \mathbf{b} \\
				& \begin{rcases} 
					0 \leq \lambda_i \leq 1 \\
					1 - \lambda_i \leq \tfrac{1}{2} \tilde{M}_i x_i \\
					0 \geq \lambda_i + M_i(x_i - 1) \quad
				\end{rcases} \; \forall i \in \{1, \ldots, n\}. \label{cons: strong duality linear 2} 
			\end{align}
		\end{subequations}
		
		Finally, we set $M_i = \tilde{M}_i = 1$ for all $i \in \{1, \ldots, n\}$. Analogously to the proof of Theorem \ref{theorem 1}, we conclude that (\ref{ILP 4 2}) has a unique feasible solution such that $\mathbf{x} = \mathbf{0}$ and~$\boldsymbol{\lambda} = \mathbf{1}$, and the result follows. 
	\end{proof}	
\end{theorem}

Let $z_b^{\prime*}$ and $z^{\prime \, \text{max}}_b$ be the optimal and the maximum feasible objective function values of [\textbf{BLP}$'$]. 
Then, an $\varepsilon$-approximate optimality verification problem for [\textbf{BLP-D}$'$] reads as:

\begin{itemize}
	\item[$ $] [\textbf{BLP-D}$'_\varepsilon$]: Given an optimal solution $(\tilde{\mathbf{x}}^*, \tilde{\mathbf{y}}^*)$ of the MILP reformulation (\ref{MILP 2}) with $\mathbf{M} \in \mathbb{R}^{m_f}_{>0}$ and $\varepsilon \in (0, 1)$, decide whether $\mathbf{a}^{\top} \tilde{\mathbf{x}}^* + 
	\mathbf{d}^{\top} \tilde{\mathbf{y}}^* - z_b^{\prime*} \leq \varepsilon(z^{\prime\, \text{max}}_b - z_b^{\prime*})$.
\end{itemize}
Similar to Corollary \ref{corollary 1}, the following result holds. 
\begin{corollary} \label{corollary 2}
	Under the conditions of Theorem \ref{theorem 2}, \upshape [\textbf{BLP-D}$'_\varepsilon$] \itshape is $coNP$-hard for any fixed $\varepsilon \in (0, 1)$.
\end{corollary}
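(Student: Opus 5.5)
The plan is to chain the reduction of Lemma~\ref{lemma 1} with the construction in the proof of Theorem~\ref{theorem 2}, exactly as Corollary~\ref{corollary 1} chains Lemma~\ref{lemma 1} with Theorem~\ref{theorem 1}. We start from an arbitrary instance of [\textbf{ILP-D}], with integral data and $\bar{\mathbf{x}} = \mathbf{0}$. We then form the auxiliary 0-1 ILP (\ref{ILP epigraph}) in the variables $(\mathbf{x}, z) \in \{0,1\}^{n+1}$, using $M = n\Vert \mathbf{c}\Vert_\infty + 1$. The proof of Lemma~\ref{lemma 1} shows that this instance has $z^{\text{max}} = 0$ and attains the feasible value $0$ at $(\mathbf{0},0)$. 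Its optimal value $z^*$ is $0$ if $\bar{\mathbf{x}}$ is optimal for (\ref{ILP}) and $-1$ otherwise. Hence, for any fixed $\varepsilon\in(0,1)$, the point $(\mathbf{0},0)$ satisfies (\ref{eq: approximation}) if and only if the original [\textbf{ILP-D}] instance is a ``yes''-instance. This is because $0 \le \varepsilon\cdot 0$ holds, whereas $1 \le \varepsilon\cdot 1$ fails.

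Next we feed (\ref{ILP epigraph}), which is itself a 0-1 ILP with integral data, into the bilevel embedding (\ref{ILP 2}) with the added redundant constraint that the leader variables are binary. The leader variables are now $(\mathbf{x}, z)$, so there are $n+1$ follower components and $n+1$ pairs of follower constraints. Applying the strong-duality reformulation (\ref{MILP 2}) with the single global parameter $M_j = M = 1$, the argument of Theorem~\ref{theorem 2} carries over verbatim. It collapses the MILP to (\ref{ILP 4 2}), whose unique feasible point has all leader variables equal to $0$, namely $(\mathbf{x},z)=(\mathbf{0},0)$, with $\boldsymbol{\lambda}=\mathbf{1}$. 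So the given MILP-optimal solution is $(\tilde{\mathbf{x}}^*, \tilde{\mathbf{y}}^*) = ((\mathbf{0},0),\mathbf{0})$, and it can be written down in polynomial time.

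The remaining step is to check that the bilevel quantities match the ILP quantities: $z_b^{\prime *}= z^*$ and $z_b^{\prime\,\text{max}} = z^{\text{max}}$. This holds because the bilevel-feasible points of (\ref{ILP 2}) with binary leader are exactly the pairs (feasible ILP point, $\mathbf{y}^*=\mathbf{0}$), and the objectives agree. The follower's optimal response $y_i^*=\min\{\tfrac12 x_i, 1-x_i\}$ equals $0$ on binaries, so the coupling constraint $\mathbf{y}^*=\mathbf{0}$ excludes nothing. Since the leader objective does not involve $\mathbf{y}$, both the minimum and the maximum over the bilevel feasible set coincide with those of (\ref{ILP epigraph}). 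The inequality defining [\textbf{BLP-D}$'_\varepsilon$] is therefore literally (\ref{eq: approximation}) for (\ref{ILP epigraph}), and $coNP$-hardness follows.

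The main, though mild, obstacle is this last identification of $z^{\prime\,\text{max}}_b$. In general the maximum over a bilevel feasible set need not equal an ILP maximum. Here it does, because the follower's response is unique and the leader-feasible set is exactly the ILP's feasible set. One should also double-check that (\ref{ILP epigraph}) satisfies Assumption~A1 when cast as (\ref{ILP 2}): the induced region lies in $[0,1]^{n+1}$, and the follower problem is bounded. Both facts are immediate.
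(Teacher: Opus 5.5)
Your proposal is correct and follows the paper's route. The paper proves this corollary only by analogy with Corollary~\ref{corollary 1}: it combines the reduction of Lemma~\ref{lemma 1} with the construction in the proof of Theorem~\ref{theorem 2}, where $M=1$ forces the unique MILP-feasible point $\mathbf{x}=\mathbf{0}$. Your check that $z_b^{\prime*}=z^*$ and $z_b^{\prime\,\text{max}}=z^{\text{max}}$, via the unique follower response $\mathbf{y}^*=\mathbf{0}$ on binary leader decisions, supplies exactly the detail the paper leaves implicit.
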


\section{Analysis of min-max case} \label{sec: min-max}
By construction, both Theorems \ref{theorem 1} and \ref{theorem 2} are obtained for the optimistic bilevel problem [\textbf{BLP}] with coupling constraints, i.e., $\mathbf{G} \neq \mathbf{0}$. Meanwhile, most practical BLPs do not involve coupling constraints; see, e.g., the surveys in \cite{Dempe2002, Kleinert2021} and the references therein. Motivated by this observation, we first extend our results to optimistic BLPs without coupling constraints.

\begin{theorem} \label{theorem 3}
The optimality verification problem \upshape [\textbf{BLP-D}] \itshape is $coNP$-complete even when $M^p = M^d$ and no coupling constraints are present, i.e., $\mathbf{G} = \mathbf{0}$. 
\begin{proof}
Similar to the proof of Theorem \ref{theorem 1}, we use a polynomial-time reduction from [\textbf{ILP-D}]  and a slightly modified version of the BLP in (\ref{ILP 2}). Specifically, we note that (\ref{ILP}) can be expressed as:
	\begin{subequations} \label{ILP 2 3}
		\begin{align}
			\min_{\mathbf{x}, \mathbf{y}^*} & \; \mathbf{c}^\top \mathbf{x} \\
			\mbox{s.t. } & \mathbf{A}\mathbf{x} \leq \mathbf{b} \label{cons: leader feasibility penalty} \\
			& \mathbf{0} \leq \mathbf{x} \leq \mathbf{1} \label{cons: leader binary penalty}\\
			& \mathbf{y}^* \leq \mathbf{0} \label{cons: coupling binary peanlty} \\
		  & \mathbf{y}^* \in \argmin_{\,\mathbf{y}}
		  \Big\{ -\sum_{i = 1}^{n} 2 y_i \Big\} \label{obj: follower binary penalty} \\
		  & \text{s.t.} \quad 2\mathbf{y} \leq \mathbf{x} \label{cons: follower binary penalty 1}\\
		  & \phantom{\text{s.t.} \quad} \mathbf{y} \leq \mathbf{1} - \mathbf{x} \label{cons: follower binary penalty 2}. 	
		\end{align}
	\end{subequations}
	where (\ref{cons: leader binary penalty}), (\ref{obj: follower binary penalty})--(\ref{cons: follower binary penalty 2}), and the modified coupling constraint (\ref{cons: coupling binary peanlty}) imply that~$\mathbf{x} \in \{0,1\}^n$. Furthermore, by applying the exact penalization approach \cite{Henke2025a}, the bilevel problem (\ref{ILP 2 3}) can be reduced to the following optimistic~BLP without coupling constraints: 
		\begin{subequations} \label{ILP 3 3}
		\begin{align}
			\min_{\mathbf{x}, \mathbf{y}^*, w^*} & \; \mathbf{c}^\top \mathbf{x} + \eta w^* \\
			\mbox{s.t. } & \text{(\ref{cons: leader feasibility penalty})--(\ref{cons: leader binary penalty})}, \label{cons: leader binary penalty 0} \\
			& (\mathbf{y}^*, w^*) \in \argmin_{\,\mathbf{y}, w}
			\Big\{ -\sum_{i = 1}^{n} 2 y_i \Big\} \label{obj: follower binary penalty 2} \\
			& \text{s.t.} \quad \text{(\ref{cons: follower binary penalty 1})--(\ref{cons: follower binary penalty 2})}, \\
			& \phantom{\text{s.t.} \quad} \mathbf{y} \leq w \mathbf{1} \label{cons: follower binary penalty 3} \\
			& \phantom{\text{s.t.} \quad} 4w \geq 0. \label{cons: follower binary penalty 4}	
		\end{align}
	\end{subequations}
  Here, $\eta > 0$ is a penalty parameter and constraint (\ref{cons: follower binary penalty 4}) is scaled without loss of generality.
	
	Since the follower's problem in~(\ref{obj: follower binary penalty})--(\ref{cons: follower binary penalty 2}) admits a finite optimum, $\mathbf{x} = \mathbf{0}$ is feasible in (\ref{ILP}), and the leader variables $\mathbf{x}$ are bounded, we conclude that Assumption \textbf{A1} holds. Hence, by Theorem~2.2 in~\cite{Henke2025a}, there exists polynomially-encodable $\eta_0 > 0$ such that, for all $\eta \geq \eta_0$, the optimal solution sets of (\ref{ILP 2 3}) and~(\ref{ILP 3 3}) coincide. That is, for sufficiently large $\eta$, the optimal solution of~(\ref{ILP 3 3}) satisfies $\mathbf{y}^* =\mathbf{0}$ and~$w^* = 0$.  
	
	Let $\boldsymbol{\lambda} \in \mathbb{R}^n_+$, $\boldsymbol{\nu} \in \mathbb{R}^n_+$, $\boldsymbol{\theta} \in \mathbb{R}^n_+$ and $\psi \in \mathbb{R}_+$ be dual variables corresponding to the follower constraints (\ref{cons: follower binary penalty 1}), (\ref{cons: follower binary penalty 2}), (\ref{cons: follower binary penalty 3}) and (\ref{cons: follower binary penalty 4}), respectively. The stationarity condition with respect to $w$ yields that 
	\begin{equation} \nonumber
	-\sum_{i = 1}^n \theta_i - 4\psi = 0 	
  \end{equation}	
  and, hence, any dual feasible solution satisfies $\boldsymbol{\theta} = \mathbf{0}$ and $\psi = 0$. 
	By applying the remaining KKT conditions to the follower's problem (\ref{obj: follower binary penalty 2})--(\ref{cons: follower binary penalty 4}), we obtain the following MILP reformulation of (\ref{ILP 3 3}):
	\begin{subequations} \label{ILP 4 3}
		\begin{align}
			\min_{\mathbf{x}, \mathbf{y}, w, \boldsymbol{\lambda}, \boldsymbol{\nu}, \mathbf{u}, \mathbf{v}, \mathbf{s}, q} & \; \, \mathbf{c}^\top \mathbf{x} + \eta w^* \\
		  \mbox{s.t. } & \text{(\ref{cons: leader feasibility penalty})--(\ref{cons: leader binary penalty})}, \\
			& \begin{rcases} 2\lambda_i + \nu_i = 2 \\ 0 \leq \lambda_i \leq M^d \, u_i \\
				0 \leq x_i - 2y_i \leq M^p (1 - u_i) \\
				0 \leq \nu_i \leq M^d \, v_i \\
				0 \leq 1 - x_i - y_i \leq M^p (1 - v_i) \quad \\
				0 \leq w - y_i \leq M^p s_i \\
			\end{rcases} \; \forall i \in \{1, \ldots, n\} \label{cons: big M 3}\\
			& 0 \leq w \leq \tfrac{1}{4} M^p q \\
			& \mathbf{u} \in \{0, 1\}^n, \; \mathbf{v} \in \{0, 1\}^n, \; \mathbf{s} \in \{0, 1\}^n, \; q \in \{0, 1\}. 
		\end{align}
	\end{subequations}
	
	With $M^p = M^d = 1$ and a fixed index $i \in \{1, \ldots, n\}$, it can be verified that any feasible solution of~(\ref{ILP 4 3}) satisfies $u_i = 1$ and $v_i = 0$. In particular, $u_i = v_i = 1$ yields that $y_i = \tfrac{1}{3}$, while feasibility requires $y_i \leq w \leq \tfrac{1}{4}$. We conclude that $\nu_i = 0$, $\lambda_i = 1$, $y_i = \tfrac{x_i}{2}$, and (\ref{ILP 4 3}) reads as:
	\begin{subequations} \label{ILP 5 3}
		\begin{align}
			\min_{\mathbf{x}, w, \mathbf{s}, \mathbf{q}} & \; \, \mathbf{c}^\top \mathbf{x} + \eta w \label{obj: ILP 4 4} \\
			\mbox{s.t. } & \text{(\ref{cons: leader feasibility penalty})--(\ref{cons: leader binary penalty})}, \\
			& \begin{rcases} 
				x_i \leq \tfrac{2}{3} \\
				\tfrac{x_i}{2} \leq w \leq \tfrac{x_i}{2} + s_i \\
			\end{rcases} \; \forall i \in \{1, \ldots, n\} \label{cons: big M 4}\\
			& w \leq \tfrac{1}{4} q \label{cons: big M 4 2} \\
			& \mathbf{s} \in \{0, 1\}^n, \; q \in \{0, 1\}. 
		\end{align}
	\end{subequations}
	
	Since $\eta > 0$, optimal $w^*$ satisfies $w^* = \tfrac{1}{2} \Vert \mathbf{x} \Vert_{\infty}$. Furthermore, variables $\mathbf{s} \in \{0, 1\}^n$ and $q \in \{0, 1\}$ do not affect the objective function (\ref{obj: ILP 4 4}), and setting them to $1$ only relaxes constraints (\ref{cons: big M 4})-(\ref{cons: big M 4 2}). We~may therefore assume $\mathbf{s} = \mathbf{1}$ and $q = 1$ without loss of generality. As a result, we obtain the following linear programming reformulation of (\ref{ILP 5 3}):
	\begin{subequations} \label{ILP 6 3}
		\begin{align}
			\min_{\mathbf{x}} & \; \, \mathbf{c}^\top \mathbf{x} + \tfrac{\eta}{2} \Vert \mathbf{x} \Vert_{\infty} \\
			\mbox{s.t. } & \mathbf{A}\mathbf{x} \leq \mathbf{b} \\
			& \mathbf{0} \leq \mathbf{x} \leq \tfrac{1}{2} \mathbf{1}.
			\end{align}
	\end{subequations}
	
	Now, suppose that $\eta > \eta_1 := 2\Vert \mathbf{c} \Vert_{1}$. Then, for any feasible $\mathbf{x} \neq \mathbf{0}$, Hölder's inequality implies 
	\[\mathbf{c}^\top \mathbf{x} + \tfrac{\eta}{2} \Vert \mathbf{x} \Vert_{\infty} > \mathbf{c}^\top \mathbf{x} + \Vert \mathbf{c} \Vert_{1} \Vert \mathbf{x} \Vert_{\infty} \geq -\Vert \mathbf{c} \Vert_{1} \Vert \mathbf{x} \Vert_{\infty} + \Vert \mathbf{c} \Vert_{1} \Vert \mathbf{x} \Vert_{\infty} = 0.\]
	Consequently, $\mathbf{x}^* = \mathbf{0}$ is the unique optimal solution of (\ref{ILP 6 3}). We conclude that for $\eta > \max\{\eta_0, \eta_1\}$ a ``yes''-instance of [\textbf{ILP-D}] implies a ``yes''-instance of~[\textbf{BLP-D}] without coupling constraints, and vice versa. This observation concludes the proof. 
\end{proof}	
\end{theorem}

\begin{theorem} \label{theorem 4}
	The optimality verification problem \upshape [\textbf{BLP-D}$'$] \itshape is $coNP$-complete even when $M_j = M$ for all $j \in \{1, \ldots, m_f\}$ and no coupling constraints are present, i.e., $\mathbf{G} = \mathbf{0}$. 
	\begin{proof} Similar to the proof of Theorem \ref{theorem 2}, we use a polynomial-time reduction from [\textbf{ILP-D}]. We first note that (\ref{ILP}) can be expressed as a mixed-integer BLP of the form:
	    	\begin{align*}
	    		\min_{\mathbf{x}, \mathbf{y}^*} & \; \mathbf{c}^\top \mathbf{x} \\
	    		\mbox{s.t. } & \mathbf{A}\mathbf{x} \leq \mathbf{b} \\
	    		& \mathbf{x} \in \{0, 1\}^n \\
	    		& \mathbf{y}^* \in \argmin_{\,\mathbf{y}}
	    		\Big\{ -\sum_{i = 1}^{n} 2 y_i \Big\} \\
	    		& \text{s.t.} \quad 2\mathbf{y} \leq \mathbf{x} \\
	    		& \phantom{\text{s.t.} \quad} \mathbf{y} \leq \mathbf{1} - \mathbf{x}.	
	    	\end{align*}
	In particular, the additional binary constraint $\mathbf{x} \in \{0, 1\}^n$ readily implies that $\mathbf{y^*} = \mathbf{0}$. The remainder of the proof follows that of Theorem \ref{theorem 2} with only minor adjustments, and is therefore omitted.
	\end{proof}	
\end{theorem}	

\begin{corollary} \label{corollary 3}
	Under the conditions of Theorems \ref{theorem 3} and \ref{theorem 4}, respectively, \upshape [\textbf{BLP-D}$_\varepsilon$] \itshape and \upshape [\textbf{BLP-D}$'_\varepsilon$] \itshape are $coNP$-hard.
\end{corollary}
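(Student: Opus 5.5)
We prove Corollary~\ref{corollary 3} by composing the gap-creating reduction of Lemma~\ref{lemma 1} with the reductions in the proofs of Theorems~\ref{theorem 3} and~\ref{theorem 4}, exactly as Corollary~\ref{corollary 1} was obtained from Theorem~\ref{theorem 1}.

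\emph{Step 1: replace the source instance.} Starting from an instance of [\textbf{ILP-D}] with $\bar{\mathbf{x}}=\mathbf{0}$, we first pass to the epigraph 0-1 ILP (\ref{ILP epigraph}), whose variables are $(\mathbf{x},z)$, all binary with integral data. As shown in the proof of Lemma~\ref{lemma 1}, its optimal value is $0$ if $\bar{\mathbf{x}}=\mathbf{0}$ is optimal for (\ref{ILP}) and $-1$ otherwise. In both cases its maximum feasible value is $0$, attained at the all-zero point. Hence, for any fixed $\varepsilon\in(0,1)$, the all-zero point satisfies (\ref{eq: approximation}) if and only if it is optimal, i.e., if and only if the original [\textbf{ILP-D}] instance is a ``yes''-instance. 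Moreover, (\ref{ILP epigraph}) is itself an instance of [\textbf{ILP-D}] with reference point $\mathbf{0}$.

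\emph{Step 2: apply the bilevel reductions.} We feed (\ref{ILP epigraph}) into the constructions of Theorem~\ref{theorem 3} (KKT, $\mathbf{G}=\mathbf{0}$, $M^p=M^d=1$) and Theorem~\ref{theorem 4} (strong duality, $\mathbf{G}=\mathbf{0}$, common $M$). We then check that each construction preserves the objective values that matter.

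For Theorem~\ref{theorem 3}, with $\eta\ge\eta_0$ the optimal solution sets of (\ref{ILP 2 3}) and (\ref{ILP 3 3}) coincide, so $w^*=0$ at bilevel-optimal points and $z_b^*$ equals the ILP optimum. We also need $z_b^{\max}$ to equal the ILP maximum value $0$. Here we must check that bilevel-feasible points of (\ref{ILP 3 3}) have $w^*\ge 0$ and attain value $\ge$ the ILP value. Every feasible point has $w^*\ge0$ by (\ref{cons: follower binary penalty 4}), so the leader objective is $\mathbf{c}^\top\mathbf{x}+\eta w^*\ge\mathbf{c}^\top\mathbf{x}$. This means $z_b^{\max}$ might exceed $0$.

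This is the main obstacle: the penalty term may enlarge $z_b^{\max}$. Enlarging the denominator $z^{\max}_b-z_b^*$ could make the MILP output $\mathbf{0}$, which has value $0$, pass the $\varepsilon$-test even in ``no''-instances. We would handle it as follows.

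\begin{itemize}
\item At $\mathbf{x}$ with $0<x_i<1$, follower optimality forces $y_i=\min\{x_i/2,1-x_i\}>0$ and hence $w\ge y_i>0$. Therefore the bilevel-feasible region includes fractional $\mathbf{x}$, and $z_b^{\max}$ is finite but possibly large, of order $\eta$.
\item The cleanest fix is to keep the construction and exploit that $\varepsilon$ is fixed while the gap can be rescaled. Multiply the objective of (\ref{ILP epigraph}) by a large integer factor $K$, i.e., use $-Kz$, before embedding, while keeping $\eta$ fixed at a value that works for the unscaled penalization. This requires choosing $\eta$ relative to the unscaled problem and checking that the optimal-set equivalence of \cite{Henke2025a} still holds once the objective is scaled.
\item Alternatively, we compute an explicit polynomial bound $z_b^{\max}\le B$ (roughly $\|\mathbf{c}\|_1+\eta/2$) and choose $K>\varepsilon B/(1-\varepsilon)$.
\end{itemize}

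Then $\mathbf{0}$ has gap $0$ in ``yes''-instances and gap $K$ in ``no''-instances, and $K>\varepsilon(z_b^{\max}+K)$. For Theorem~\ref{theorem 4} no penalty is involved: $\mathbf{x}$ is binary, $\mathbf{y}^*=\mathbf{0}$, and the bilevel values coincide with the ILP values. Lemma~\ref{lemma 1} therefore transfers directly, as in Corollary~\ref{corollary 2}.

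\emph{Step 3: conclude.} In both settings the MILP reformulation with the stated big-$M$ values has the unique optimal solution $\mathbf{x}=\mathbf{0}$. The approximation test for it succeeds exactly on ``yes''-instances of [\textbf{ILP-D}], which gives $coNP$-hardness of [\textbf{BLP-D}$_\varepsilon$] and [\textbf{BLP-D}$'_\varepsilon$].
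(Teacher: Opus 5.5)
Your Theorem~\ref{theorem 4} half is correct and matches the paper's proof, which says only ``analogously to Corollary~\ref{corollary 1}.'' In that construction $\mathbf{x}$ is binary by a leader constraint and $\mathbf{y}^*=\mathbf{0}$, so $z_b^{\prime *}$ and $z_b^{\prime\,\text{max}}$ equal the ILP values and Lemma~\ref{lemma 1} transfers.

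For the Theorem~\ref{theorem 3} half you correctly flag something the paper's one-line proof takes for granted. The penalized problem (\ref{ILP 3 3}) admits fractional bilevel-feasible leader decisions, and these carry a penalty. Your repair of this, however, has a genuine gap.

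\textbf{You cannot keep $\eta$ fixed while scaling the objective of (\ref{ILP epigraph}) to $-Kz$.} Take the point $(\mathbf{x},z)=(\mathbf{0},\tfrac12)$, which is feasible whenever $M\ge 2$. The follower component attached to $z$ equals $\min\{\tfrac z2,1-z\}=\tfrac14$, so $w^*=\tfrac14$. The point therefore has value $\tfrac{\eta}{4}-\tfrac{K}{2}$, both in the LP (\ref{ILP 6 3}) and in the penalized bilevel problem. For $K>\eta/2$ this value is negative, which has two consequences:
\begin{enumerate}
\item the all-zero point is no longer MILP-optimal;
\item exact penalization breaks, since a ``yes''-instance no longer has $z_b^*=0$.
\end{enumerate}
Hence $\eta\ge 2K$ is forced. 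This is just $\eta_1=2\Vert\mathbf{c}\Vert_1$ for the scaled cost vector.

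\textbf{Once $\eta$ scales with $K$, so does your bound $B$, and the choice $K>\varepsilon B/(1-\varepsilon)$ becomes circular.} In any ``no''-instance, take a feasible binary $\mathbf{e}\neq\mathbf{0}$. The leader decision $(\tfrac23\mathbf{e},0)$ is feasible by convexity with $\mathbf{0}$. The follower must set $y_i=\tfrac13$ wherever $e_i=1$, so $w^*\ge\tfrac13$ and $z_b^{\text{max}}\ge\eta/3\ge 2K/3$. Moreover $z_b^*=-K$, because the objective $-Kz+\eta w^*$ is bounded below by $-K$ and attains it at the binary optimum. Combining these,
\[
\varepsilon\bigl(z_b^{\text{max}}-z_b^*\bigr)\;\ge\;\tfrac53\,\varepsilon K\;\ge\;K \qquad\text{for every } \varepsilon\ge\tfrac35 .
\]
So the all-zero MILP solution passes the $\varepsilon$-test in a ``no''-instance, whatever $K$ you choose: the relevant ratio is invariant under the scaling.

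\textbf{Your claim that $z_b^{\text{max}}$ is finite also needs justification.} It presupposes reading $z_b^{\text{max}}$ as the maximum of the optimistic value function. The follower in (\ref{ILP 3 3}) is indifferent to $w$, so its argmin contains every $w\ge\max\{0,\max_i y_i\}$. Read literally, $z_b^{\text{max}}=+\infty$, and the test becomes vacuous.

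For fixed $\varepsilon$, the Theorem~\ref{theorem 3} part therefore needs a genuinely different argument. One option would be a construction in which fractional leader decisions are not bilevel feasible and $w$ is determined by $\mathbf{x}$.

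If $\varepsilon$ may be part of the input, no scaling is needed. Note that Corollary~\ref{corollary 3}, unlike Corollary~\ref{corollary 1}, does not say ``fixed.'' Under the value-function reading $z_b^{\text{max}}\le\eta/3$, so any $\varepsilon<3/(3+\eta)$ works.
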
 
\begin{proof}
The proof for both cases proceeds analogously to that of Corollary~\ref{corollary 1}.
\end{proof}

Next, we demonstrate that both Theorems \ref{theorem 3} and \ref{theorem 4} apply to min-max problems where $\mathbf{G} = \mathbf{0}$ and $\mathbf{d} =-\mathbf{g}$. Notably, min-max problems constitute the simplest class of BLPs, where both optimistic and pessimistic formulations coincide; see, e.g., \cite{Marcotte2005, Wiesemann2013}.

First, the exact penalization approach from~\cite{Henke2025a} applies even when the leader’s objective function in~(\ref{ILP 2 3}) is augmented by the term $\sum_{i=1}^n 2 y_i^*$, which is opposite to the follower's objective function. Following the proof of Theorem~\ref{theorem 3}, this additional term further reduces to $\sum_{i=1}^n x_i$ in the linear programming reformulation~(\ref{ILP 6 3}), and a similar argument implies that $\mathbf{x}^* = \mathbf{0}$ for sufficiently large $\eta$. Therefore, Theorem \ref{theorem 3} remains valid for min-max problems. 

A more nuanced approach is required for the strong duality–based reformulation in Theorem \ref{theorem 4}. The reason is that, in the min–max case, the follower's problem in (\ref{cons: follower}) can be replaced by its dual problem, with the strong duality condition being omitted. The following result holds. 

\begin{theorem} \label{theorem 5}
	The optimality verification problem \upshape [\textbf{BLP-D}$'$] \itshape is $coNP$-complete even when $M_j = M$ for all $j \in \{1, \ldots, m_f\}$, $\mathbf{G} = \mathbf{0}$ and $\mathbf{d} = -\mathbf{g}$. 
	\begin{proof} Similar to the proof of Theorem \ref{theorem 2}, we use a polynomial-time reduction from [\textbf{ILP-D}]. We note that (\ref{ILP}) can be expressed as a min-max problem of the form:
		\begin{subequations} \label{ILP 6}
			\begin{align}
				\min_{\mathbf{x}, \mathbf{y}^*} & \; \mathbf{c}^\top \mathbf{x} + \sum_{i = 1}^{n} 2 y^*_i \\
				\mbox{s.t. } & \mathbf{A}\mathbf{x} \leq \mathbf{b} \\
				& \mathbf{x} \in \{0, 1\}^n \\
				& \mathbf{y}^* \in \argmax_{\,\mathbf{y}}
				\Big\{ \sum_{i = 1}^{n} 2 y_i \Big\} \\
				& \text{s.t.} \quad 2\mathbf{y} \leq \eta \mathbf{x} \\
				& \phantom{\text{s.t.} \quad} \mathbf{y} \leq \eta (\mathbf{1} - \mathbf{x}),	
			\end{align}
		\end{subequations}
	  where $\eta > 0$ serves as a penalty parameter specified later.
		Notably, $\mathbf{x} \in \{0,1\}^n$ implies that $\mathbf{y}^* = \mathbf{0}$, and therefore $\sum_{i=1}^{n} 2y_i^* = 0$. 
		
		By leveraging (\ref{ILP 3 2}) with $u_i = \lambda_i x_i$ and $v_i = \nu_i x_i$, $i \in \{1, \ldots, n\}$, we obtain the following dual reformulation of (\ref{ILP 6}):
		\begin{subequations} \label{ILP 2 6}
			\begin{align}
				\min_{\mathbf{x}, \boldsymbol{\lambda}, \boldsymbol{\nu}, \mathbf{u}, \mathbf{v}} & \; \mathbf{c}^\top \mathbf{x} + \eta \sum_{i = 1}^{n} \big( u_i + \nu_i - v_i \big) \\
				\mbox{s.t. } & \text{(\ref{cons: binary})--(\ref{cons: strong duality linear})}.	
			\end{align}
		\end{subequations}
	By setting $M_i = \tilde{M}_i = 1$ in constraints (\ref{cons: strong duality linear}), we observe that $\mathbf{x} = \mathbf{u} = \mathbf{v} = \boldsymbol{\nu} = \mathbf{0}$ and $\boldsymbol{\lambda} = \mathbf{1}$ is a feasible solution of (\ref{ILP 2 6}) with the optimal objective value $0$. In contrast, $x_j = 1$ for some $j \in \{1, \ldots, n\}$ implies that $u_j = \lambda_j$, $v_j = \nu_j$ and $\lambda_j \in [\tfrac{1}{2}, 1]$. Then, $u_i \geq 0$ and $\nu_i - v_i \geq 0$ for each $i \in \{1, \ldots, n\}$~yields
	\[\sum_{i = 1}^{n} \big( u_i + \nu_i - v_i \big) \geq \lambda_j \geq \tfrac{1}{2}.\] 
	We conclude that, for $\eta > 2\Vert \mathbf{c} \Vert_{1}$ and any binary $\mathbf{x} \neq \mathbf{0}$,
	\[\mathbf{c}^\top \mathbf{x} + \eta \sum_{i = 1}^{n} \big( u_i + \nu_i - v_i \big) > - \Vert \mathbf{c} \Vert_{1} \Vert\mathbf{x}\Vert_{\infty} + \Vert \mathbf{c} \Vert_{1} = 0.\]
	Thus, $\mathbf{x}^* = \mathbf{0}$ is the unique optimal solution of (\ref{ILP 2 6}), and the result follows. 
	\end{proof}	
\end{theorem}	

\section{Global bilevel correctness of big-$M$} \label{sec: global}
It has been established by Kleinert~et~al.~\cite{Kleinert2020} that verifying \textit{bilevel correctness} of the big-$M$ parameters in the MILP reformulation (\ref{MILP}) is computationally difficult when performed \textit{a priori}, i.e., before solving~(\ref{MILP}). We show that verifying bilevel correctness remains $coNP$-complete even \textit{a posteriori}, i.e., when an optimal solution to~(\ref{MILP}) is available; recall the research question \textbf{Q2} in Section \ref{sec: intro}.
To this end, we study two auxiliary decision problems that determine whether the chosen big-$M$ constants cut off feasible or optimal vertices of the follower’s dual~polyhedron.

\subsection{Valid bounds for dual feasible solutions}	
The follower's dual polyhedron is defined by constraints (\ref{cons: dual feasibility 1})-(\ref{cons: dual feasibility 2}), that is,
\begin{equation} \label{dual polyhedron}
	P = \big\{\boldsymbol{\lambda} \in \mathbb{R}^{m_f}_+: \; \mathbf{F}^\top \boldsymbol{\lambda} + \mathbf{g} = \mathbf{0} \big\}.
\end{equation}
Then, a decision version of the global big-$M$ verification problem~(GVP) is formulated as follows: 
\begin{itemize}
	\item[$ $] [\textbf{GVP-D}]: Given the dual polyhedron $P$ defined by equation (\ref{dual polyhedron}), an index $j \in \{1, \ldots, m_f\}$, and an optimal solution of the MILP reformulation (\ref{MILP}) with $M^d \in \mathbb{R}_{>0}$, decide whether every vertex $\mathbf{v} \in \text{vert}(P)$ satisfies~$v_j \leq~M^d$.
\end{itemize}

In the following, we use a reduction from the problem of finding a component-wise optimal vertex of a polyhedron:
\begin{itemize}
	\item[$ $] [\textbf{COVP-D}]: Given a polyhedron
	\begin{equation} \label{polyhedron 2}
		\tilde{P} = \big\{\boldsymbol{\lambda} \in \mathbb{R}_+^{m}: \; \tilde{\mathbf{F}}^\top \boldsymbol{\lambda} + \tilde{\mathbf{g}} = \mathbf{0} \big\},
	\end{equation}
	where $\tilde{\mathbf{F}}$ and $\tilde{\mathbf{g}}$ are rational, a rational bound $\tilde{M}$ such that $\tilde{P} \cap [0, \tilde{M}]^m \neq \emptyset$, and an index $j \in \{1,\ldots,m\}$, decide whether every vertex $\mathbf{v} \in \operatorname{vert}(\tilde{P})$ satisfies $v_j \le \tilde{M}$.
\end{itemize}

It is rather straightforward to verify that both problems [\textbf{COVP-D}] and~[\textbf{GVP-D}] belong to~$coNP$. While Kleinert et al.~\cite{Kleinert2020} establish $coNP$-completeness of [\textbf{COVP-D}] for general polyhedra, we consider polyhedra of the form~(\ref{polyhedron 2}) with 
the additional property that $\tilde{P} \cap [0, \tilde{M}]^m \neq \emptyset$. To establish $coNP$-completeness in this restricted setting, we rely on the Hamiltonian-path construction of~\cite{Fukuda1997}.

\begin{lemma} \label{lemma 2}
The decision problem \upshape [\textbf{COVP-D}] \itshape is $coNP$-complete. 
\begin{proof}
	We slightly modify the reduction from the directed Hamiltonian path problem used in Theorem~4.1 of \cite{Fukuda1997}. Thus, given a directed graph $G=(V,A)$ with source $s$ and destination $t$, let $x_{ij}$ denote the flow on arc $(i,j) \in A$. The $s$--$t$ path polyhedron is then defined as:
\begin{equation} \nonumber
	P(G):=\Big\{\mathbf{x} \in\mathbb{R}^{|A|}_+:\ 
	\sum_{j : \, (v,j)\in A} x_{vj} - \sum_{i: \, (i,v)\in A} x_{iv}
	=
	\begin{cases}
		\phantom{-}1, & v=s,\\
		-1, & v=t,\\
		\phantom{-}0, & v\in V\setminus\{s,t\},
	\end{cases}
	\ \forall v\in V
	\Big\}.
\end{equation}
	Without loss of generality, assume that $|V| \geq 3$ and a directed arc $(s, t) \in A$ is present. Furthermore, we set the cost vector $\mathbf{c} = \mathbf{1}$ and choose $\tilde{M} = |V| - 2$. 
	
	By introducing a new epigraph variable $\tau$, we consider the lifted polyhedron
	\[
	\tilde P \;:=\;\{(\mathbf{x}, \tau)\in\mathbb{R}_+^{|A|+1}:\ \mathbf{x} \in P(G),\ \tau- \mathbf{c}^\top \mathbf{x}=0\},
	\]
	which can be written in the form \eqref{polyhedron 2}. Since $|V| \geq 3$ and $(s, t) \in A$, the trivial path $(s,t)$ yields a feasible point of $\tilde{P}$ with $\tau = 1 \leq \tilde{M}$. Therefore, $\tilde{P} \cap [0, \tilde{M}]^{|A| + 1} \neq \emptyset$, and we thus obtain an instance of~[\textbf{COVP-D}] with index~$j$ corresponding to~$\tau$.
	 
	Next, since $\tau$ is uniquely determined by $\mathbf{x}$, the vertices of $\tilde{P}$ have the form~$(\mathbf{v}, \mathbf{c}^\top \mathbf{v})$ where $\mathbf{v} \in \text{vert}(P(G))$. Furthermore, Theorem~4.1 in \cite{Fukuda1997} shows that the vertices of~$P(G)$ are exactly the characteristic vectors of simple $s$--$t$ paths in $G$. As a result, $G$ has no Hamiltonian~$s$--$t$ path if and only if every vertex $(\mathbf{x}, \tau) \in \text{vert}(\tilde{P})$ satisfies $\tau \leq \tilde{M} =|V|- 2$, i.e., the answer to [\textbf{COVP-D}] is ``yes''.
\end{proof}
\end{lemma}

\begin{theorem} \label{theorem 6}
The decision problem \upshape [\textbf{GVP-D}] \itshape is $coNP$-complete. 
\begin{proof}
 Given an instance of [\textbf{COVP-D}], we consider the following instance of [\textbf{BLP}]:
\begin{subequations}
	\label{BLP global}
	\begin{align}
		\min_{\mathbf{x}, \mathbf{y}^*} \; & 
		0 \\
		\text{s.t. } 
		& \mathbf{x} = \mathbf{0} \\
		& \mathbf{y}^* \in 
		\argmin_{\;\mathbf{y}} \,
		\Big\{\tilde{\mathbf{g}}^{\top}\mathbf{y}: \tilde{\mathbf{F}} \mathbf{y} \leq \mathbf{0} \Big\}. \label{cons: BLP global follower}
	\end{align}
\end{subequations}
Notably, $\mathbf{y} = \mathbf{0}$ is feasible for the follower's problem in (\ref{cons: BLP global follower}). Moreover, the follower's dual feasible set coincides with $\tilde{P}$, which is non-empty by design. Hence, the follower's problem in~(\ref{cons: BLP global follower}) admits a finite optimum, and Assumption~\textbf{A1} holds. In addition, the MILP reformulation of (\ref{BLP global}) is given by:
\begin{subequations}
	\label{MILP global}
	\begin{align}
		\min_{\mathbf{x}, \mathbf{y}, \boldsymbol{\lambda}, \mathbf{u}} \; & 
		0 \\
		\text{s.t. } 
		& \mathbf{x} = \mathbf{0} \\
		& \tilde{\mathbf{F}}^\top \boldsymbol{\lambda} + \tilde{\mathbf{g}} = \mathbf{0} \\
		& \mathbf{0} \leq \boldsymbol{\lambda} \leq M^d \mathbf{u} \\
		& \mathbf{0} \leq -\tilde{\mathbf{F}} \mathbf{y} \leq M^p (\mathbf{1} - \mathbf{u}) \\
		& \mathbf{u} \in \{0, 1\}^m.
	\end{align}
\end{subequations}

Assume that an optimal solution 
of (\ref{MILP global}) is given by $\tilde{\mathbf{x}}^* = \tilde{\mathbf{y}}^* = \mathbf{0}$, $\tilde{\mathbf{u}}^* = \mathbf{1}$, and any feasible $\tilde{\boldsymbol{\lambda}}^* \in \tilde{P}$ with $\tilde{\boldsymbol{\lambda}}^* \leq M^d \mathbf{1}$. Provided that $M^d = \tilde{M}$ and~$\tilde{P} \cap [0, \tilde{M}]^m \neq \emptyset$, the outlined optimal solution exists and can be computed in polynomial time. Moreover, it remains optimal regardless of whether the underlying instance of [\textbf{COVP-D}] is a ``yes'', or a ``no'' instance. Finally, by construction,~the answer to [\textbf{GVP-D}] is ``yes'' if and only if [\textbf{COVP-D}] admits a ``yes''~instance.
\end{proof}
\end{theorem}
\subsection{Valid bounds for bilevel feasible solutions}	
Following Kleinert~et~al.~\cite{Kleinert2020}, we define a global big-$M$ verification problem over the induced leader's feasible region (\ref{eq: leader's feasible set extended}) as follows:
\begin{itemize}
	\item[$ $] [\textbf{GVP-D$'$}]: Given the dual polyhedron $P$ defined by equation (\ref{dual polyhedron}) and 
	an optimal solution of the MILP reformulation (\ref{MILP}) with $M^d \in \mathbb{R}_{>0}$, decide whether for every feasible $\mathbf{x} \in H_x$ there exists an optimal dual vertex
	\begin{equation} \nonumber
		\mathbf{v}^* \in 
		\argmax_{\,\mathbf{v}} \Big\{
		(\mathbf{L}\mathbf{x} - \mathbf{f})^\top \mathbf{v}: \; \mathbf{v} \in \mathrm{vert}(P) \Big\}
	\end{equation}
	such that $v^*_j \leq M^d$ for all $j \in \{1,\ldots,m_f\}$.
\end{itemize}

It is rather straightforward to verify that [\textbf{GVP-D$'$}] belongs to $coNP$. To prove $coNP$-completeness, we use a reduction from the complement of the partition problem (PP) given by: 
\begin{itemize}
	\item[$ $] [\textbf{PP-D}]: Given a set of positive integers $a_i \in \mathbb{Z}_{>0}$, $i \in \{1, \ldots, n\}$, decide whether $\mathbf{a}^\top \boldsymbol{\sigma} \neq 0$ for all $\boldsymbol{\sigma} \in \{-1, 1\}^n$.
\end{itemize}
Since the complement of [\textbf{PP-D}] is $NP$-complete \cite{Garey1979}, the problem is $coNP$-complete. Furthermore, the following results hold.

\begin{lemma} \label{lemma 3}
	If the answer to \upshape [\textbf{PP-D}] \itshape is ``yes'', then, for all 
	$\mathbf{x} \in [-1,1]^n$ such that 
	$\mathbf{a}^\top \mathbf{x} = 0$, we have $\Vert \mathbf{x} \Vert_1 \leq n - 1/\Vert\mathbf{a}\Vert_\infty$.
	\begin{proof} 
	Let $\boldsymbol{\sigma} := \text{sign}(\mathbf{x})$, where we adopt the convention that if $x_i = 0$, then $\sigma_i \in \{-1, 1\}$ can be chosen arbitrarily. Since $\mathbf{a}^\top \boldsymbol{\sigma}$ is integer and the answer to [\textbf{PP-D}] is ``yes'', we conclude that~$|\mathbf{a}^\top \boldsymbol{\sigma}| \geq 1$. Furthermore, using $\mathbf{a}^\top \mathbf{x} = 0$ and applying Hölder's inequality yields
	\[1 \leq |\mathbf{a}^\top \boldsymbol{\sigma}| = |\mathbf{a}^\top (\boldsymbol{\sigma} - \mathbf{x})| \leq \Vert\mathbf{a}\Vert_\infty \Vert \boldsymbol{\sigma} - \mathbf{x} \Vert_1. \]
	Finally, we note that $\Vert \boldsymbol{\sigma} - \mathbf{x} \Vert_1 = \sum_{i = 1}^n |\sigma_i - x_i| = n - \Vert \mathbf{x} \Vert_1$, which implies the result.
	\end{proof}
\end{lemma}
	
	\begin{theorem} \label{theorem 7}
		The decision problem \upshape [\textbf{GVP-D}$'$] \itshape is $coNP$-complete. 
		\begin{proof}
		Given an instance of [\textbf{PP-D}], we consider the following instance of [\textbf{BLP}]:	
		\begin{subequations}
			\label{BLP global 2}
			\begin{align}
				\min_{\mathbf{x}, \mathbf{y}^*, z^*} \; & 
				0 \\
				\text{s.t. } 
				& \mathbf{x} \in [-1,1]^n \label{cons: leader global 1} \\
				& \mathbf{a}^\top \mathbf{x} = 0 \label{cons: leader global 2} \\
				& (\mathbf{y}^*, z^*) \in 
				\argmin_{\;\mathbf{y}, z} \,
				2z \label{obj: follower global} \\ 
				& \text{\quad \quad s.t.} \quad y_i \geq x_i \quad \forall i \in \{1, \ldots, n\} \label{cons: follower global 1}\\
				& \phantom{\quad \quad s.t. \quad} y_i \geq -x_i \quad \forall i \in \{1, \ldots, n\} \label{cons: follower global 2} \\
				& \phantom{\quad \quad \text{s.t.} \quad} z \geq \sum_{i = 1}^n y_i - K \label{cons: follower global 3}\\
				& \phantom{\quad \quad \text{s.t.} \quad} 2z \geq 0, \label{cons: follower global 4} 	
			\end{align}
		\end{subequations}
		where $K = n - 1/\Vert\mathbf{a}\Vert_\infty \geq 0$. It can be verified that Assumption \textbf{A1} holds and, furthermore, the optimal solution of the follower's problem (\ref{obj: follower global})--(\ref{cons: follower global 4}) for fixed $\mathbf{x} \in [-1, 1]^n$ is unique and given by~$\mathbf{y}^* = |\mathbf{x}|$ and $z^* = \max\{\Vert \mathbf{x} \Vert_1 - K; 0\}$.
		
		Let $\boldsymbol{\lambda} \in \mathbb{R}_+^{n}$, $\boldsymbol{\nu} \in \mathbb{R}_+^{n}$, $\theta \in \mathbb{R}_+$ and $\psi \in \mathbb{R}_+$ be dual variables corresponding to the follower constraints~(\ref{cons: follower global 1})--(\ref{cons: follower global 4}), respectively. Then, the follower's dual problem is given by:
		\begin{subequations}
			\label{global dual problem}
			\begin{align}
				\max_{\boldsymbol{\lambda}, \boldsymbol{\nu}, \theta, \psi} \; & 
				\Big\{(\boldsymbol{\lambda} - \boldsymbol{\nu})^\top \mathbf{x} - K \theta \Big\} \\
				\text{s.t. } 
				& \boldsymbol{\lambda} + \boldsymbol{\nu} = \theta \mathbf{1} \label{cons: follower daul global 1} \\
				& \theta + 2\psi = 2 \label{cons: follower daul global 2} \\
			  & \boldsymbol{\lambda} \geq \mathbf{0}, \; \boldsymbol{\nu} \geq \mathbf{0}, \; \theta \geq 0, \; \psi \geq 0. \label{cons: follower daul global 3}
			\end{align}
		\end{subequations}
			Furthermore, the MILP reformulation of (\ref{BLP global 2}) reads as: 
		  \begin{subequations}
		  	\label{MILP global 2}
		  	\begin{align}
		  		\min_{\mathbf{x}, \mathbf{y}, z, \boldsymbol{\lambda}, \boldsymbol{\nu}, \theta, \psi, \mathbf{u}, \mathbf{v}, s, q} \; & 
		  		0 \\
		  		\text{s.t. } & \text{(\ref{cons: leader global 1})--(\ref{cons: leader global 2}), (\ref{cons: follower global 1})--(\ref{cons: follower global 4}), (\ref{cons: follower daul global 1})--(\ref{cons: follower daul global 3})}, \\ 
		  		& \mathbf{0} \leq \boldsymbol{\lambda} \leq M^d \mathbf{u} \\
		  		& \mathbf{0} \leq \mathbf{y} - \mathbf{x} \leq M^p (\mathbf{1} - \mathbf{u}) \\
		  		& \mathbf{0} \leq \boldsymbol{\nu} \leq M^d \mathbf{v} \\
		  		& \mathbf{0} \leq \mathbf{x} + \mathbf{y} \leq M^p (\mathbf{1} - \mathbf{v}) \\
		  		& 0 \leq \theta \leq M^d s \\
		  		& 0 \leq z - \sum_{i = 1}^n y_i + K \leq M^p (1 - s) \\
		  		& 0 \leq \psi \leq M^d q \\
		  		& 0 \leq z \leq \tfrac{1}{2} M^p (1 - q) \\
		  		& \mathbf{u} \in \{0, 1\}^n, \; \mathbf{v} \in \{0, 1\}^n, \; s \in \{0, 1\}, \; q \in \{0, 1\}.
		  	\end{align}
		  \end{subequations}
		  
		Assume that $M^d = 1$ and $M^p = n$. To obtain an optimal solution of (\ref{MILP global 2}), we set all variables to zero, except $\tilde{\psi}^* = \tilde{q}^* = 1$.
		 Notably, this solution remains optimal regardless of whether the underlying instance of~[\textbf{PP-D}] is a ``yes'' or a~``no''~instance.
		
		Let $P$ denote the dual feasible set defined by the constraints (\ref{cons: follower daul global 1})--(\ref{cons: follower daul global 3}). First, we assume that the answer to [\textbf{PP-D}] is ``yes''. In this case, Lemma \ref{lemma 3} implies that for all $\mathbf{x}$ satisfying the leader constraints~(\ref{cons: leader global 1})--(\ref{cons: leader global 2}), we have $\Vert \mathbf{x} \Vert_1 \leq K$. Hence, $2z^* = 0$, and strong duality implies that the optimal objective function value in~(\ref{global dual problem}) is also equal to zero. Furthermore, it can readily be checked that $\boldsymbol{\lambda} = \boldsymbol{\nu} = \mathbf{0}$, $\theta = 0$ and $\psi = 1$ corresponds to a vertex of~$P$ that attains the optimal objective function value. We therefore conclude that [\textbf{GVP-D$'$}] with~$M^d = 1$ admits a ``yes''~instance.  
		
		Now, assume that the answer to [\textbf{GVP-D}$'$] is ``yes''. Suppose, for contradiction, that [\textbf{PP-D}] is a ``no'' instance. That is, there exists $\tilde{\mathbf{x}} \in \{-1, 1\}^n$ such that $\mathbf{a}^\top \tilde{\mathbf{x}} = 0$. In particular, $\Vert \tilde{\mathbf{x}} \Vert_1 = n$ and the respective follower's optimal objective function value in (\ref{obj: follower global})--(\ref{cons: follower global 4}) is given by $2z^* = 2(n - K) > 0$.
		Next, we note that the optimal dual objective function value in (\ref{global dual problem}) at $\mathbf{x} = \tilde{\mathbf{x}}$ is bounded from above~as:
		\[(\boldsymbol{\lambda} - \boldsymbol{\nu})^\top \tilde{\mathbf{x}} - K \theta \leq \sum_{i = 1}^n |\lambda_i - \nu_i| - K \theta \leq \sum_{i = 1}^n (\lambda_i + \nu_i) - K\theta = (n - K)\theta.\]
		Hence, any vertex of $P$ attaining the optimal objective value $2(n-K)$ must satisfy $\theta = 2$. Since $\theta = 2 > M^d$, no optimal dual vertex $\mathbf{v}^* \in \text{vert}(P)$ at $\mathbf{x} = \tilde{\mathbf{x}}$ can satisfy the bound $\mathbf{v}^* \leq M^d \mathbf{1}$. This 
		contradicts the assumption that [\textbf{GVP-D}$'$] is a ``yes'' 
		instance. Therefore, [\textbf{PP-D}] admits a ``yes'' instance, and the 
		result follows.
	  \end{proof}  
	\end{theorem}  
  We observe that the \textit{a posteriori} hardness result of Theorem~\ref{theorem 7} differs from the corresponding \textit{a priori} result of Kleinert et al.~\cite{Kleinert2020} in three important aspects. First, Theorem~\ref{theorem 7} is established assuming that an optimal 
  solution of the MILP reformulation (\ref{MILP}) is given as input, which immediately implies the corresponding \textit{a priori} hardness result when this input is omitted. Second, while the complexity result in \cite{Kleinert2020} is established via a connection to bilevel feasibility, Theorem~\ref{theorem 7} is based on an explicit polynomial-time reduction from the complement of the partition problem. Finally, our reduction does not require uniqueness of the follower’s primal and dual optimal solutions in [\textbf{BLP}] for every feasible leader's decision (see Assumption 1 in \cite{Kleinert2020}), as such an assumption can be restrictive in~practice.
   
   \section{Conclusion} \label{sec: conclusion}
   In this paper, we analyze mixed-integer linear programming (MILP) reformulations of bilevel linear programs (BLPs) that involve big-$M$ parameters. Specifically, we ask whether an optimal solution of the big-$M$ reformulation is \textit{bilevel optimal} and whether the given big-$M$ parameters are \textit{bilevel correct}, i.e., do not cut off any feasible or optimal solutions of the lower-level problem. 
   While previous results in the literature have primarily focused on the \textit{a priori} validation of the big-$M$ parameters, we consider an \textit{a posteriori} problem, where an optimal solution of the big-$M$ reformulation is provided~as~input. 
   
   An efficient algorithm for either of these verification problems could potentially be used to tune the big-$M$ parameters in practice.
   However, we establish that even in a very restricted problem setting both problems remain computationally difficult. In addition, we demonstrate that the big-$M$ reformulation cannot provide any nontrivial approximation guarantee for the original~BLP.
  
  Taken together, these negative results highlight inherent limitations of big-$M$ reformulations for bilevel optimization problems. In particular, it turns out that access to an optimal solution of big-$M$ reformulations does not mitigate the computational difficulty of verifying big-$M$ correctness. 

 \onehalfspacing
 \bibliographystyle{apa}
 \bibliography{bibliography}
\end{document}